\newcommand{\beqa}{\begin{eqnarray*}}
\newcommand{\eeqa}{\end{eqnarray*}}
\newcommand{\beqn}{\begin{eqnarray}}
\newcommand{\eeqn}{\end{eqnarray}}
\newcommand{\ra}{\rightarrow}
\newcommand{\bT}{\mathbb T}
\newcommand{\ov}{\overline}
\newcommand{\la}{\lambda}
\newcounter{cnt1}
\newcounter{cnt2}
\newcounter{cnt3}
\newcommand{\blr}{\begin{list}{$($\roman{cnt1}$)$}
 {\usecounter{cnt1} \setlength{\topsep}{0pt}
 \setlength{\itemsep}{0pt}}}
\newcommand{\bla}{\begin{list}{$($\alph{cnt2}$)$}
 {\usecounter{cnt2} \setlength{\topsep}{0pt}
 \setlength{\itemsep}{0pt}}}
\newcommand{\bln}{\begin{list}{$($\arabic{cnt3}$)$}
 {\usecounter{cnt3} \setlength{\topsep}{0pt}
 \setlength{\itemsep}{0pt}}}
\newcommand{\el}{\end{list}}
\newtheorem{thm}{Theorem}[section]
\newtheorem{cor}[thm]{Corollary}
\newtheorem{Def}[thm]{Definition}
\newtheorem{rem}[thm]{Remark}
\newcommand{\Rem}{\begin{rem} \rm}
\newcommand{\bdfn}{\begin{Def} \rm}
\newcommand{\edfn}{\end{Def}}
\newcommand{\ba}{\begin{array}}
\newcommand{\ea}{\end{array}}
\newtheorem*{ack}{Acknowledgements}
\begin{document}
\sloppy

\title{Algebraic reflexivity of the set of $n$-isometries on $C(X,E)$}
\author{ABDULLAH BIN ABU BAKER}

\address[Abdullah Bin Abubaker]{Department of Mathematics and Statistics\\
Indian Institute of Technology, Kanpur \\
India. \textit{E-mail~:} \textit{abdullah@iitk.ac.in}}

\subjclass[2000]{47L05; 46B20}.

\keywords{Algebraic reflexivity, Isometry group, Generalized bi-circular projection}

\begin{abstract}

We prove that if the group of isometries of $C(X,E)$ is
algebraically reflexive, then the group of $n$-isometries is also
algebraically reflexive. Here, $X$ is a compact Hausdorff space
and $E$ is a Banach space. As a corollary to this, we establish
the algebraic reflexivity of the set of generalized bi-circular
projections on $C(X,E)$. This answers a question raised in
\cite{DR}.

\end{abstract}

\maketitle

\section{Introduction}

Let $X$ be a compact Hausdorff space and $E$ be a Banach space. We
denote by $\mathcal{G}(E)$ the group of all surjective isometries
of $E$. Suppose that $E$ has trivial centralizer. Then any isometry $T$
of $C(X,E)$ is of the form $Tf(x)= \tau(x)(f(\phi(x)))$ for $x \in X$ and
$f \in C(X,E)$, where $\tau : X \ra \mathcal{G}(E)$ continuous in strong
operator topology and $\phi$ is a homeomorphism  of $X$ onto itself.
Let $\mathcal{G}^n(C(X,E))= \{ T \in
\mathcal{G}(C((X,E))): T^n=I\}$. Any $T \in \mathcal{G}^n(C(X,E))$ is
called an $n$-isometry. It can be easily proved that
$T \in \mathcal{G}^n(C(X,E))$ if and
only if there exist a homeomorphism $\phi$ of $X$ such that
$\phi^n(x)=x$ for all $x \in X$, a map $\tau : X \ra
\mathcal{G}(E)$ satisfying $\tau(x) \circ
\tau(\phi(x)) \circ...\circ \tau(\phi^{n-1}(x))=I$, where $I$
denotes the identity map and $T$ is given by $Tf(x)=
\tau(x)(f(\phi(x)))$, for all $x \in X$ and $f \in C(X,E)$.

Let $B(E)$ be the set of all bounded linear operators on $E$ and
$S$ is any subset if $B(E)$. The Algebraic closure
$\ov{\mathcal{S}}^a$ of $S$ is defined as follow: $T \in
\ov{\mathcal{S}}^a$ if for every $e \in E$ there exists $T_e \in
S$ such that $T(e) = T_e(e)$. $S$ is said to algebraically
reflexive if $S = \ov{\mathcal{S}}^a$. Let $\bT$ denotes the unit
circle. A linear projection $P: E \ra E$ is said to be a
generalized bi-circular projection (GBP for short) if for some
$\la \in \bT$, $ P + \la (I-P)$ is an isometry.

It was proved by Dutta  and Rao, see \cite{DR}, that for a compact Hausdorff space
$X$, if $\mathcal{G}(C(X))$ is algebraically reflexive then
$\mathcal G^2(C(X))$ is also algebraically reflexive. In this note we
prove this result for vector valued continuous functions and for
any $n \geq 2$. We also answer a question raised by them about the
algebraic reflexivity of the set of GBP's of $C(X,E)$.

\section{Main Results}

\begin{thm}\label{main}
Let $X$ be a compact Hausdorff space and $E$ be a Banach space. If
$\mathcal{G}(C(X,E))$ is algebraically reflexive, then
$\mathcal{G}^n(C(X,E))$ is also algebraically reflexive.
\end{thm}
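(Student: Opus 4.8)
Since $\mathcal{G}^n(C(X,E)) \subseteq \mathcal{G}(C(X,E))$, the algebraic closures satisfy
$\overline{\mathcal{G}^n(C(X,E))}^{\,a} \subseteq \overline{\mathcal{G}(C(X,E))}^{\,a} = \mathcal{G}(C(X,E))$,
the last equality being precisely the hypothesis. Hence any $T$ in the algebraic closure of $\mathcal{G}^n(C(X,E))$ is already a surjective isometry of $C(X,E)$, so it has the form $Tf(x) = \tau(x)(f(\phi(x)))$ for some homeomorphism $\phi$ of $X$ and some map $\tau : X \to \mathcal{G}(E)$ continuous in the strong operator topology. By the description of $\mathcal{G}^n(C(X,E))$ recalled in the Introduction, it therefore suffices to prove that (i) $\phi^n = \mathrm{id}_X$ and (ii) $\tau(x)\circ\tau(\phi(x))\circ\cdots\circ\tau(\phi^{n-1}(x)) = I$ for every $x \in X$; together these give $T^n = I$.

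For each $f \in C(X,E)$ fix, using the hypothesis, a witness $T_f = (\phi_f,\tau_f) \in \mathcal{G}^n(C(X,E))$ with $Tf = T_f f$; thus $\phi_f^n = \mathrm{id}_X$, the cocycle identity holds for $\tau_f$ along $\phi_f$, and $\tau(x)(f(\phi(x))) = \tau_f(x)(f(\phi_f(x)))$ for all $x \in X$, a relation we call $(\ast_f)$. To prove (i), fix $x_0 \in X$ and set $x_k = \phi^k(x_0)$ for $0 \le k \le n$. The idea is to pick a single test function $f = a\,e$, with $e \in E$ of norm one and $a \in C(X)$ a scalar weight adapted to the finite set $\{x_1,\dots,x_n\}$, so that $(\ast_f)$, evaluated at $x_0,\dots,x_{n-1}$ with norms taken, forces $\phi_f(x_k) = x_{k+1}$ for $k = 0,\dots,n-1$: indeed $(\ast_f)$ at $x_k$ gives $|a(\phi_f(x_k))| = |a(x_{k+1})|$, so $\phi_f(x_k)$ lies in the level set of $|a|$ through $x_{k+1}$, and $a$ may be chosen (via Urysohn functions, and closed $G_\delta$ or cozero neighbourhoods of the orbit points when $X$ is not metrizable, with a limiting argument if the orbit repeats) so that these level values are attained only at the intended points. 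Then $x_n = \phi_f^n(x_0) = x_0$, and since $x_0$ was arbitrary, $\phi^n = \mathrm{id}_X$.

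With (i) in hand, every $\phi$-orbit is finite, of some size $p$ dividing $n$. Fix such an orbit $O = \{x_0,\dots,x_{p-1}\}$. Choosing test functions $f$ adapted to $O$ exactly as above, but now in addition nonvanishing on $O$ and with prescribed values on $O$, one finds that $O$ is also a $\phi_f$-orbit with the same cyclic structure, and that $(\ast_f)$ along $O$ identifies $\tau(x_k)$ with $\tau_f(x_k)$ on the relevant vectors; letting these values vary (and using strong-operator continuity of $\tau$ and $\tau_f$) promotes this to $\tau(x_k) = \tau_f(x_k)$ on all of $E$ for each $x_k \in O$. Since $T_f \in \mathcal{G}^n(C(X,E))$, the cocycle identity for $\tau_f$ around the $\phi_f$-orbit $O$ now reads off as the same identity for $\tau$ around $O$, which is (ii). Hence $T \in \mathcal{G}^n(C(X,E))$, and the theorem follows.

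The main obstacle is the interaction with the witnesses $T_f$: the hypothesis yields, for a given $f$, only the single pointwise relation $(\ast_f)$, and the pair $(\phi_f,\tau_f)$ it produces may depend on $f$. The crux is therefore the construction in the second paragraph --- engineering the scalar weight $a$ (or, when $X$ is not metrizable, controlling $|a|$ on suitable $G_\delta$ neighbourhoods) so that $(\ast_f)$ rigidly forces $\phi_f$ to coincide with $\phi$ along an entire $\phi$-orbit --- together with the parallel refinement in the third paragraph, which must keep $f$ nonvanishing on the orbit while still separating it in modulus and must recover all of $\tau$ on the orbit despite the $f$-dependence of the witness. Once $\phi_f$ and $\tau_f$ are controlled on an orbit the cocycle identity drops out; the remaining care concerns orbits that repeat points, isolated points of $X$, and the possibly large group $\mathcal{G}(E)$.
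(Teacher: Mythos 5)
Your proposal follows essentially the same route as the paper: after using algebraic reflexivity of $\mathcal{G}(C(X,E))$ to write $T$ as a weighted composition operator $Tf(x)=\tau(x)(f(\phi(x)))$, you test against rank-one functions $h\otimes e$ whose scalar weight separates the orbit points in modulus, forcing $\phi_f$ to track $\phi$ along the orbit (so $\phi^n=\mathrm{id}$ follows from $\phi_f^n=\mathrm{id}$) and then transferring the cocycle identity from $\tau_f$ to $\tau$ --- exactly the paper's argument, with its four-case analysis compressed into a single orbit argument. The points you flag as delicate (prescribing exact level sets at single points, and upgrading agreement of $\tau$ with $\tau_f$ at particular vectors to the full cocycle identity) are treated no more carefully in the paper's own proof, so your plan matches it in both strategy and level of detail.
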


\begin{proof}
Let $T \in \ov{\mathcal{G}^n(C(X,E))}^a$. Then, for each $f \in
C(X,E)$ we have $Tf(x)= \tau_f(x)(f(\phi_f(x)))$ where $\tau_{f} :
X \ra \mathcal{G}(E)$ is continuous in s.o.t satisfying $\tau_f(x)
\circ \tau_f(\phi(x)) \circ...\circ \tau_f(\phi^{n-1}(x))=I$ and
$\phi_f$ is a homeomorphism of $X$ such that $\phi^n_f(x)=x$ for
all $x \in X$. By the algebraic reflexivity of
$\mathcal{G}(C(X,E))$, $T$ is an isometry and hence $Tf(x)= \tau
(x)(f(\phi(x)))$ where $\tau(x) \in \mathcal{G}(E)$ for each $x$
and $\phi$ is a homeomorphism of $X$. To show that $T^n=I$ we need
to show that $\phi^{n}(x)=x$ and $\tau(x) \circ \tau(\phi(x))
\circ...\circ \tau(\phi^{n-1}(x))=I$.

Firstly suppose that $f$ is a rank one tensor product, i.e., $f=h
\bigotimes e$, where $h$ is a strictly positive function $C(X)$
and $e \in E$. Since $\tau (x)(f(\phi(x)))=
\tau_f(x)(f(\phi_f(x)))$, we get $\tau (x)(h(\phi(x))e)=
\tau_f(x)(h(\phi_f(x))e)$. As $\tau(x)$ and $\tau_f(x)$ are both
isometries we have $h(\phi(x))=h(\phi_f(x))$. Therefore,
$\tau(x)=\tau_f(x)$ for all $x \in X$.

Now consider any point $x \in X$.\\
\textbf{Case(i)}: $x=\phi(x)$. \\
Then $\phi^{n}(x)=\phi(\phi(...(\phi(x))...))(n~times)=x$. Choose
$h \in C(X)$ such that $0< h \leq 1 $ and $h^{-1}(1)=\{x\}$. For
$f=h \bigotimes e$, $e \in E$, evaluating $Tf$ at $x$ we get $
Tf(x)=\tau(x)(f(\phi(x)))=\tau(x)(h(x)e)=\tau(x)(e)=\tau_f(x)(f(\phi_f(x)))=
\tau_f(x)(h(\phi_f(x))e)$. As $\tau(x)$ and $\tau_f(x)$ are
isometries, by the choice of $h$ we have $\phi_f(x)=x$. Now, $I=
\tau_f(x) \circ \tau_f(\phi_f(x)) \circ...\circ
\tau_f(\phi_f^{n-1}(x))=\tau_f(x) \circ \tau_f(x) \circ...\circ
\tau_f(x)=\tau(x) \circ \tau(x) \circ...\circ \tau(x)$. \\
\textbf{Case(ii)}:  $\phi(x)\neq x$, $\phi^m(x)= x$, $m$ divides
$n$ and $\phi^s(x)\neq x$ $\forall~ s$ such that $ 1 \leq s <
m$.\\
As $m$ divides $n$, $n=mq$ for some positive integer $q$.
Therefore,
$\phi^n(x)=\phi^{mq}(x)=\phi^m(\phi^m(...(\phi^m(x)))...)(q~
times)=x$. Now, choose $h \in C(X)$ such that $1\leq h \leq m $
and $h^{-1}(1)=\{x\}$, $h^{-1}(2)=\{\phi(x)\}$,...,
$h^{-1}(m)=\{\phi^{m-1}(x)\}$. Let $f=h \bigotimes e$, for any $e
\in E$. Evaluating $Tf$ at $x$ we get $Tf(x)=
\tau(x)(f(\phi(x)))=\tau(x)(h(\phi(x))e)=\tau(x)(2e)=
\tau_f(x)(f(\phi_f(x)))=\tau_f(x)(h(\phi_f(x))e)$. This implies
that $\tau(x)(2e)=\tau_f(x)(h(\phi_f(x))e)$. As $\tau(x)$ and
$\tau_f(x)$ are isometries we get $h(\phi_f(x))=2$ and by the
choice of $h$ we get $\phi(x)=\phi_f(x)$. Similarly, applying $Tf$
at $\phi(x),...~,\phi^{n-2}(x)$ we get $\phi^p(x)= \phi_f^p(x)$
for $1\leq p\leq n-1$. Using the above and the fact that
$\tau(x)=\tau_f(x)$ for all $x \in X$ we have $\tau(x) \circ
\tau(\phi(x)) \circ...\circ \tau(\phi^{n-1}(x))=$ $\tau_f(x) \circ
\tau_f(\phi_f(x)) \circ...\circ \tau_f(\phi_f^{n-1}(x))=I$.\\
\textbf{Case(iii)}: $\phi(x)\neq x$, $\phi^{m}(x)= x$, $m$
does not divide $n$ and $\phi^{s}(x)\neq x$ $\forall~s$ such that $ 1 \leq s < m$.\\
Therefore, $\exists$ integers $r$ and $q$ such that
$n=mq+r,~0<r<m$. Now, choosing $h \in C(X)$ as in case(ii) and
proceeding in the same way we get $\phi^p(x)= \phi_f^p(x)$ for
$1\leq p\leq n-1$. Now,
$Tf(\phi^{n-1}(x))=\tau(\phi^{n-1}(x))(f(\phi^n(x)))=\tau(\phi^{n-1}(x))(h(\phi^n(x))e)
=\tau_f(\phi^{n-1}(x))(f(\phi_f(\phi^{n-1}(x))))
=\tau_f(\phi^{n-1}(x))(f(\phi_f(\phi^{n-1}_f(x))))=\tau_f(\phi^{n-1}(x))(f(\phi^n_f(x)))=
\tau_f(\phi^{n-1}(x))(f(x))=\tau_f(\phi^{n-1}(x))(h(x)e)=\tau_f(\phi^{n-1}(x))(e)$.
Thus,\\
$\tau(\phi^{n-1}(x))(h(\phi^n(x))e)=\tau_f(\phi^{n-1}(x))(e)$
which implies that $\phi^n(x)=x$.  But $\phi^m(x)= x$ implies that
$\phi^{mq}(x)=x$ and therefore
$x=\phi^n(x)=\phi^{r+mq}(x)=\phi^r(\phi^{mq}(x))=\phi^r(x)$,  a contradiction  because $r<m$.\\
\textbf{Case(iv)}: $\phi(x)\neq x$, $\phi^2(x)\neq x$, ...,
$\phi^{n-1}(x)\neq x$.\\
Choose $h$ such that $1\leq h \leq n $ and $h^{-1}(1)=\{x\}$,
$h^{-1}(2)=\{\phi(x)\}$,..., $h^{-1}(n)=\{\phi^{n-1}(x)\}$.
Proceeding the same way as in case (iii) we get $\phi^n(x)= x$ and
$\tau(x) \circ \tau(\phi(x)) \circ...\circ \tau(\phi^{n-1}(x))=I$.

\end{proof}

Our next corollary follows from a result by Jarosz and Rao, see
Theorem 7 in \cite{JR}, and Theorem~\ref{main}.

\begin{cor} \label{first}
Let $X$ be a first countable compact Hausdorff space and $E$ be a
uniformly convex Banach space such that $\mathcal{G}(E)$ is
algebraically reflexive. Then $\mathcal{G}^n(C(X,E))$ is also
algebraically reflexive.
\end{cor}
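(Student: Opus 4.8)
The plan is to combine Theorem~\ref{main} with the description of the isometry group of $C(X,E)$ given by Jarosz and Rao. The only thing that needs to be checked is the hypothesis of Theorem~\ref{main}, namely that $\mathcal{G}(C(X,E))$ is algebraically reflexive; once that is in hand, Theorem~\ref{main} immediately yields that $\mathcal{G}^n(C(X,E))$ is algebraically reflexive, which is the assertion.

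First I would recall the structure theorem: since $E$ is uniformly convex (in particular has trivial centralizer), every surjective isometry $T$ of $C(X,E)$ has the canonical weighted-composition form $Tf(x)=\tau(x)(f(\phi(x)))$ with $\phi$ a self-homeomorphism of $X$ and $\tau\colon X\to\mathcal{G}(E)$ strongly continuous. Now take $T\in\overline{\mathcal{G}(C(X,E))}^a$. For each $f$ one gets $Tf(x)=\tau_f(x)(f(\phi_f(x)))$ from some isometry depending on $f$. The goal is to show that the data $(\tau_f,\phi_f)$ can be glued into a single pair $(\tau,\phi)$ independent of $f$, so that $T$ itself has the canonical form and hence is a surjective isometry. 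I would test $T$ on rank-one tensors $f=h\otimes e$ with $h\in C(X)$ strictly positive and $e\in E$: comparing $\tau(x)(h(\phi(x))e)$-type expressions and using that isometries preserve norms recovers $\phi_f(x)=\phi(x)$ pointwise (exactly the separating/peaking-function argument used inside Theorem~\ref{main}), and then $\tau_f(x)=\tau(x)$. This is precisely where Theorem~7 of \cite{JR} does the work: under first countability of $X$ and uniform convexity of $E$ together with algebraic reflexivity of $\mathcal{G}(E)$, that theorem asserts that $\mathcal{G}(C(X,E))$ is algebraically reflexive, so I would simply invoke it rather than redo the gluing.

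So the argument reduces to two citations: apply Theorem~7 of \cite{JR} to conclude $\mathcal{G}(C(X,E))$ is algebraically reflexive, then apply Theorem~\ref{main} to conclude $\mathcal{G}^n(C(X,E))$ is algebraically reflexive. The main (and only) obstacle is verifying that the hypotheses of \cite{JR}, Theorem~7 line up with what is assumed here — first countability of $X$, uniform convexity of $E$, and algebraic reflexivity of $\mathcal{G}(E)$ — which is exactly the list in the statement, so there is nothing further to prove.
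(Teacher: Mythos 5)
Your proof is correct and matches the paper exactly: the paper likewise deduces Corollary~\ref{first} by citing Theorem~7 of \cite{JR} to obtain the algebraic reflexivity of $\mathcal{G}(C(X,E))$ and then applying Theorem~\ref{main}. The extra sketch you give of the gluing argument inside \cite{JR} is not needed, but it does no harm.
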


\begin{cor}
Let $X$ and $E$ be as in Corollary~\ref{first}. Then the set of
GBP's on  $C(X,E)$ is algebraically reflexive.
\end{cor}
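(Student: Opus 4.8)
The plan is to translate the statement into algebraic reflexivity statements for isometry groups that are already available: $\mathcal{G}(C(X,E))$ (by hypothesis) and $\mathcal{G}^n(C(X,E))$ for every $n\ge 2$ (by Corollary~\ref{first}). Write $\mathcal{P}$ for the set of GBP's on $C(X,E)$. First I would record the dictionary between GBP's and isometries: if $Q$ is a GBP and $Q+\lambda(I-Q)$ is an isometry with $\lambda\in\bT$, $\lambda\ne 1$, then $T:=Q+\lambda(I-Q)=\lambda I+(1-\lambda)Q$ lies in $\mathcal{G}(C(X,E))$, $Q=\frac{1}{1-\lambda}(T-\lambda I)$, and from $T-I=(1-\lambda)(Q-I)$, $T-\lambda I=(1-\lambda)Q$ one gets $(T-I)(T-\lambda I)=(1-\lambda)^2(Q^2-Q)=0$. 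Next, the set $\Lambda_Q:=\{\lambda\in\bT:Q+\lambda(I-Q)\text{ is an isometry}\}$ is a closed subgroup of $\bT$, because $(Q+\lambda(I-Q))(Q+\mu(I-Q))=Q+\lambda\mu(I-Q)$ and $(Q+\lambda(I-Q))^{-1}=Q+\overline{\lambda}(I-Q)$; since a closed subgroup of $\bT$ other than $\{1\}$ is either $\bT$ or the group of $n$th roots of unity for some $n\ge2$, every GBP has a parameter $\zeta$ that is a primitive $n$th root of unity, and then $T=Q+\zeta(I-Q)$ satisfies $T^n=I$, i.e. $T\in\mathcal{G}^n(C(X,E))$. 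Writing $\zeta_p=e^{2\pi i/p}$, this gives a cover $\mathcal{P}=\bigcup_{p\text{ prime}}\mathcal{S}_p$ with $\mathcal{S}_p:=\{Q:Q^2=Q,\ Q+\zeta_p(I-Q)\in\mathcal{G}(C(X,E))\}$, and $Q\mapsto Q+\zeta_p(I-Q)$ maps $\mathcal{S}_p$ bijectively onto $\{T\in\mathcal{G}^p(C(X,E)):(T-I)(T-\zeta_pI)=0\}$, the latter being all of $\mathcal{G}^2(C(X,E))$ when $p=2$.

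Now let $P\in\overline{\mathcal{P}}^a$. For each $f\in C(X,E)$ choose a GBP $P_f$ with $Pf=P_ff$, a primitive $n_f$th root of unity $\lambda_f$, and $T_f:=P_f+\lambda_f(I-P_f)\in\mathcal{G}^{n_f}(C(X,E))$; evaluating at $f$ gives $T_ff=Pf+\lambda_f(f-Pf)$, so $\|Pf+\lambda_f(f-Pf)\|=\|f\|$. The crux -- and the step I expect to be the main obstacle -- is to show that a single prime $p$ works simultaneously for all $f$, i.e. $P\in\overline{\mathcal{S}_p}^a$ for one $p$. I would attack this through the weighted-composition form $T_ff(x)=\tau_f(x)(f(\psi_f(x)))$ of the isometries $T_f$, together with uniform convexity of $E$: testing against functions $f=h\otimes e$ with $h$ strictly positive, as in the proof of Theorem~\ref{main}, controls the homeomorphisms $\psi_f$ and the weights $\tau_f(x)$, and uniform convexity is what forbids the local parameters at a point from forming a non-cyclic subset of $\bT$; first-countability of $X$ enters exactly as in Corollary~\ref{first}, to pass from pointwise data to global conclusions. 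The odd-$p$ part is the cleanest: $(T-I)(T-\zeta_pI)=0$ with $p$ odd forces the composition map to be the identity, so $\mathcal{S}_p$ for odd $p$ consists of multiplication operators and the question localizes to the algebraic reflexivity of $\{S\in\mathcal{G}(E):(S-I)(S-\zeta_pI)=0\}\subseteq\mathcal{G}^p(E)$, which is handled by the method of Theorem~\ref{main}.

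Once $P\in\overline{\mathcal{S}_p}^a$ has been secured, put $\lambda=\zeta_p$ and $S:=P+\lambda(I-P)$. Then $Sf=T_ff$ for every $f$, so $S\in\overline{\mathcal{G}(C(X,E))}^a=\mathcal{G}(C(X,E))$, and in fact $S\in\overline{\mathcal{G}^p(C(X,E))}^a=\mathcal{G}^p(C(X,E))$ by Corollary~\ref{first}; thus $S$ is an isometry with $S^p=I$. It remains to verify $(S-I)(S-\lambda I)=0$, equivalently $P^2=P$ (since $(S-I)(S-\lambda I)=(1-\lambda)^2(P^2-P)$), and this I would read off from the weighted-composition description of $S$ obtained above -- the same analysis that pins down its composition map and the two-eigenvalue structure of its weights. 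With $P^2=P$ established, $P+\lambda(I-P)=S$ is an isometry and $\lambda\ne1$, so $P$ is a GBP; hence $\mathcal{P}=\overline{\mathcal{P}}^a$, which is the assertion.
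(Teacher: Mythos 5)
There is a genuine gap, and it sits exactly where you flag it yourself: you never prove that a single scalar $\lambda$ works for all $f$ simultaneously. Your reduction correctly observes that $\mathcal{P}=\bigcup_p\mathcal{S}_p$ and that each piece $\mathcal{S}_p$ could be handled via the reflexivity of $\mathcal{G}^p(C(X,E))$, but algebraic reflexivity of each $\mathcal{S}_p$ does not pass to the union, so the whole argument hinges on collapsing the union to one piece. The passage you offer for this (``uniform convexity forbids the local parameters from forming a non-cyclic subset of $\bT$'', ``for odd $p$ the relation $(T-I)(T-\zeta_p I)=0$ forces the composition map to be the identity'', and the final claim that $P^2=P$ can be ``read off'' from the weighted-composition form) is a plan, not a proof; none of these assertions is established, and the second one in particular would itself need the full weighted-composition machinery to justify.

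The paper closes this gap by citing an external structure theorem (Botelho and Jamison, reference \cite{BJ1}): under these hypotheses \emph{every} GBP on $C(X,E)$ has the form $Pf(x)=\frac12\bigl(f(x)+\tau(x)(f(\phi(x)))\bigr)$ with $\phi^2=\mathrm{id}$ and $\tau(x)\circ\tau(\phi(x))=I$, i.e.\ the parameter is always $\lambda=-1$ and $2P-I$ is a reflection. In your notation this says $\mathcal{P}=\mathcal{S}_2$, so the union problem never arises: if $P\in\overline{\mathcal{P}}^a$ then $(2P-I)f=\tau_f(\cdot)(f(\phi_f(\cdot)))$ for each $f$, hence $2P-I\in\overline{\mathcal{G}^2(C(X,E))}^a=\mathcal{G}^2(C(X,E))$ by Corollary~\ref{first} with $n=2$, and $P=\frac12(I+(2P-I))$ is a GBP. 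Your final assembly step (apply Corollary~\ref{first} to $P+\lambda(I-P)$) matches the paper's, but the load-bearing ingredient is the \cite{BJ1} characterization, which your proposal neither invokes nor reproves.
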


\begin{proof}
Let the set of all GBP's on $C(X,E)$ be denoted by ${\mathcal P}$.
Let $P \in \ov{{\mathcal P}}^a$. Then for each $f \in C(X,E)$,
there exists $P_f \in {\mathcal P}$ such that $Pf = P_f f$. Thus
by \cite{BJ1}, for each $f$ there exists a
homeomorphism $\phi_f$ of $X$, $\phi^2_f (x) = x$ for all $x \in
X$ and $\tau_f : X \ra \mathcal G(E)$ satisfying $\tau_f(x) \circ
\tau_f(\phi_f(x)) = I$ such that $Pf(x) = \frac{1}{2} (f(x) +
\tau_f(x)( f(\phi_f(x))))$.

Thus for each $f,~ (2P - I)f(x) =  \tau_f(x)( f(\phi_f(x)))$ which
implies that $2P-I \in \ov{\mathcal G^2(C(X,E))}^a$. The
conclusion follows from Corollary~\ref{first}.

\end{proof}

\begin{ack}

The author would like to thank his supervisor Dr.S Dutta for his guidance
and many insightful discussions throughout the preparation of this article.

\end{ack}


\begin{thebibliography}{99}

\bibitem{DR} S.\ Dutta and T.\ S.\ S.\ R.\ K.\ Rao, {\em Algebraic
reflexivity of some subsets of the isometry group}, Linear Algebra
Appl. 429 (2008), no. 7, 1522|1527. MR2444339 (2009j:47153).

\bibitem{BJ1} F.\ Botelho and J.\ E.\ Jamison, {\em Generalized
bi-circular projections on $C(\Omega, X)$}, Rocky Mountain J.\
Math.\ 40 (2010), no. 1, 77|83. MR2607109 (2011c:47039).

\bibitem{JR} K.\ Jarosz and T.\ S.\ S.\ R.\ K.\ Rao, {\em Local
surjective isometries of function spaces}, Math. Z. 243 (2003),
449|469. MR1970012 (2003m:46036).


\end{thebibliography}
\end{document}